\newtheorem{thm}{Theorem}[section]
\newtheorem{lem}[thm]{Lemma}
\newtheorem{prop}[thm]{Proposition}
\theoremstyle{definition}
\newtheorem{defn}[thm]{Definition}
\theoremstyle{remark}
\newtheorem{rem}[thm]{Remark}
\numberwithin{equation}{section}
 \numberwithin{equation}{section}
\title{Module cohomological properties of the Fourier algebra of an inverse semigroup}
\author[M. Amini]{Massoud Amini}
\address{ Department of Mathematics, Faculty of Mathematical Sciences, Tarbiat Modares University, Tehran 14115--134, Iran}
\email{:mamini@modares.ac.ir }
\author[A. Bodaghi]{ Abasalt Bodaghi}
\address{Department of Mathematics, Garmsar Branch, Islamic Azad University, Garmsar, Iran}
\email{abasalt.bodaghi@gmail.com}
\author[R. Rezavand]{Reza Rezavand}
\address{School of Mathematics, Statistics and Computer Science, College of Science, University of Tehran, Enghelab Ave., Tehran, Iran}
\email{rezavand@ut.ac.ir}
\keywords{completely contractive Banach algebras, module (operator)
amenability, module (operator) biflatness, module (operator) biprojectivity, inverse semigroup,
Fourier algebra}
\begin{document}

\maketitle

\begin{abstract}
For an inverse semigroup $S$ with the set of idempotents
$E$ and a minimal idempotent, we find necessary and sufficient conditions for the Fourier algebra $A(S)$ to be module
amenable, module character amenable, module (operator) biflat, or module (operator) biprojective.

\vspace{.3cm}
\noindent 2000 {\it Mathematics Subject Classification}: Primary 46L07; Secondary 46H25, 43A07.
\end{abstract}

\section{Introduction}\label{s1}
 The concept of module amenability for a class of Banach
algebras which is in fact a generalization of the classical amenability has been developed by the first author in \cite{A}. Indeed, he defined the module amenability of a Banach algebra $\mathcal A$ to the case that there is an extra ${\mathfrak A}$-module structure on $\mathcal A$ and
 showed that for every inverse semigroup $S$ with subsemigroup $E$ of idempotents, the $\ell^1(E)$-module amenability of $\ell^1(S)$ is equivalent to the amenability of $S$. Later this notion of amenability is generalized by module homomorphisms in \cite{bodaghi}. Motivated by $\phi$-amenability and character amenability which are studied in \cite{kan1} and \cite{mon}, the first and second author \cite{bod1} introduced the concept of module $(\phi,\varphi)$-amenability for Banach algebras  and investigated a module character amenable
Banach algebra. They characterized the module $(\phi,\varphi)$-amenability of a Banach algebra $\mathcal A$ through vanishing of the first Hochschild module cohomology group $\mathcal H^1_{\mathfrak A}(\mathcal A, X^*)$ for certain Banach $\mathcal A$-bimodules $X$. They also proved that $\ell^1(S)$ is module character amenable (as $\ell^1(E)$-module) if and only if $S$ is amenable; for generalized notions of module character amenability refer to \cite{bel}.

The first and third authors defined the Fourier algebra $A(S)$ of an inverse
semigroup $S$ as the predual of semigroup von Neumann algebra
$L(S)$ in \cite{AR} and showed that the co-multiplication on
$L(S)$ is implemented by a multiplicative partial isometry $W$ in
$\mathcal B(\ell^2(S \times S))$. The co-algebra structure of
$L(S)$ induces a canonical algebra structure on $A(S)$, making it
a completely contractive Banach algebra. Since the set of
idempotents $E$ of $S$ naturally act on $S$ (by multiplication),
$A(S)$ has an extra structure of a Banach module on the semigroup
algebra $\ell^1(E)$. It is  shown in \cite{AR} that if $S$ is an amenable inverse semigroup $S$ with the set of
idempotents $E$ and a minimal idempotent, then the Fourier
algebra $A(S)$ is module operator amenable, as a completely
contractive Banach algebra and an operator module over
$\ell^1(E)$.

There are some other concepts related to the notions of module amenability such as module biprojectivity and module biflatness, introduced by the first and second author in \cite{boa}. In other words, these concepts are module versions of biprojectivity and biflatness for Banach algebras which are introduced by Helemskii \cite{hel}.  For every inverse semigroup $S$ with subsemigroup $E$ of idempotents, in \cite{boa}, the authors found the necessary and sufficient conditions for the  $\ell^1(S)$ to be module biprojective and module biflat (as $\ell^{1}(E)$-module). Module biflatness for the second dual of Banach algebras is studied in \cite{bj}.

In this paper we give necessary and sufficient conditions for the Fourier algebra $A(S)$ to be module
amenable, module character amenable, module (operator) biflat, or module (operator) biprojective.

\section{Notations and Preliminary Results}\label{s1}

\subsection{Module and operator structure} In this subsection we deal with a completely contractive
Banach algebra $A$ which is a Banach module over another Banach algebra
$\mathfrak A$ with compatible actions. Our reference for operator
spaces is \cite{ER}. If $E$ and $F$ are operator spaces, and $T:
E\longrightarrow F$ is a linear map, and $T^{(n)}: M_n(E)\longrightarrow M_n(F)$ is the
$n$-th amplification of $T$ to a linear map on the corresponding
matrix algebras, then $T$ is called completely bounded if
$\|T\|_{cb} := \sup_{n\in\mathbb N} \|T^{(n)}\|< \infty$. When
$\|T\|_{cb}\leq 1$, $T$ is called a complete contraction. A
completely contractive Banach algebra $A$ is a Banach algebra which
is also an operator space such that multiplication map is a
complete contraction from $A\hat\otimes A$ to $A$ \cite{Rua},
where $A\hat\otimes A$ is the operator projective tensor product of
$A$ by itself.

Let $\mathfrak A$ be a Banach algebra and $A$ be a completely
contractive Banach algebra and a Banach $\mathfrak A$-module with
compatible actions,
\begin{equation*}
\alpha\cdot (ab)=(\alpha\cdot a)b
\,\,\,(a,b\in A,\alpha\in \mathfrak A),
\end{equation*}
and the same for the right action, then we say that $A$ is an {\it
Banach $\mathfrak A$-module}. Note that, by assumption,
\begin{equation*}
(\alpha\beta)\cdot a=\alpha\cdot(\beta\cdot a)\quad
\,\,\,(a\in A,\alpha,\beta\in \mathfrak A).
\end{equation*}
We know that
$A\hat{\otimes}_{\mathfrak A}A\cong (A\hat{\otimes}A)/I$ where $I$ is
the closed ideal generated by elements of the form $a\cdot\alpha
\otimes b- a\otimes \alpha\cdot b$, for $\alpha\in \mathfrak
A,\,a,b\in A$. This is an operator space which inherits its
operator space structure from $A\hat{\otimes}A$ \cite[Proposition
3.1.1]{ER}. We define $\omega:A\hat{\otimes}A\longrightarrow A$
by $\omega(a\otimes b) = ab$, and  $\tilde{\omega}:
A\hat{\otimes}_{\mathfrak A} A\cong(A\hat{\otimes}A)/I \longrightarrow
A/J$ by
\begin{equation*}
\tilde{\omega}(a\otimes b + I)= ab + J\,\,\,\,\,\ (a,b\in A),
\end{equation*}
both extended by linearity and continuity where
$J=\overline{\langle \omega(I)\rangle}$ is the closed ideal of
$A$ generated by $\omega(I)$. Then $\tilde{\omega},
\tilde{\omega}^{**}$ are $A$-$\mathfrak A$-module homomorphisms
\cite{A}, and since $\omega$ is a complete contraction, so is
$\tilde{\omega}$.

Let $V$ be a Banach $A$-module and a Banach $\mathfrak A$-module
with compatible actions,
\begin{equation*}
\alpha\cdot(a\cdot x)=(\alpha\cdot a)\cdot x,\,\ (a\cdot\alpha)\cdot x=a\cdot(\alpha\cdot x)
\end{equation*}
\begin{equation*}
(\alpha\cdot x)\cdot a= \alpha\cdot(x\cdot a) \,\,\,\ (a\in A, \alpha \in \mathfrak A,
x\in V),
\end{equation*}
and the same for the right or two-sided actions, such that the
module actions of $A$ on $V$ are completely bounded, then $V$ is
called an {\it operator $A$-$\mathfrak A$-module}. If moreover
\begin{equation*}
\alpha\cdot x=x\cdot\alpha \,\,\,\,\,\ (\alpha \in \mathfrak A, x\in V),
\end{equation*}
then $V$ is called a {\it commutative} $\mathfrak A$-module. A left trivial action of $\mathfrak A$ on $A$ is defined as
$\alpha\cdot a= f(\alpha)a$ for $\alpha\in \mathfrak A, a\in A$, where
$f$ is a (continuous) character of $\mathfrak A$.

Given an operator $A$-$\mathfrak A$-module $V$, a bounded map
$D:A\longrightarrow V$ is called a {\it module derivation} if
\begin{equation*}
D(a\pm b)= D(a) \pm D(b),\,\,\,\,\ D(ab)= D(a)\cdot b+ a\cdot D(b) \,\,\,\,\
(a,b\in A),
\end{equation*}
and
\begin{equation*}
D(\alpha\cdot a)= \alpha\cdot D(a),\,\,\,\,\,\ D(a\cdot\alpha)= D(a)\cdot\alpha
\,\,\,\,\ (\alpha\in \mathfrak A, a\in A).
\end{equation*}
Note that $D$ is not assumed to be $\mathbb{C}$-linear and that
$D:A\longrightarrow V$ is bounded if there exist $M>0$ such that
$\|D(a)\|\leq M\|a\|$, for each $a\in A$ \cite{A}. Let
\begin{equation*}
\|D\|= \sup_{a\neq 0}\|D(a)\|/\|a\|,
\end{equation*}
then for $D^{(n)}: M_{n}(A)\longrightarrow M_{n}(V)$, we have
$\|D^{(n)}\|\leq n^{2}\|D\|$, hence $D^{(n)}$ is bounded for each
$n$. If $\sup \|D^{(n)}\|<\infty$, we say that $D$ is {\it completely
bounded}.

A module  derivation $D$ is called {\it inner} if there
exists $v\in V$ such that
\begin{equation*}
D(a)= a\cdot v-v\cdot a \,\,\,\,\,(a\in A).
\end{equation*}

The Banach algebra $A$ is called {\it module operator amenable} (as an $\mathfrak
A$-module) if for every commutative operator $A$-$\mathfrak
A$-module $V$, each completely bounded module derivation $D:A\longrightarrow
V^{*}$ is inner.
A bounded net $\{\tilde{u_{i}}\}$ in $A\hat{\otimes}_{\mathfrak A}A$
is called a {\it module operator approximate diagonal} if
$\tilde{\omega}(\tilde{u_{i}})$ is a bounded approximate identity of
$A/J$ and
\begin{equation*}
\lim\parallel\tilde{u_{i}}\cdot a-a\cdot\tilde{u_{i}}\parallel=0\,\,\,\,\
(a\in A).
\end{equation*}
An element $\tilde{M}\in (A\hat{\otimes}_{\mathfrak A}A)^{**}$ is
called a {\it module operator virtual diagonal} if
\begin{equation*}
\tilde{\omega}^{**}(\tilde{M})\cdot a=\tilde{a}, \,\,\,\,\,\
\tilde{M}\cdot a=a\cdot\tilde{M} \,\,\,\,\,\ (a\in A),
\end{equation*}
where $\tilde{a}=a+ J^{\bot\bot}$
(see \cite{A, AR} for relation to module amenability, and \cite{A2} for a correction).


\subsection{Module character amenability} Let $\mathfrak A$ be a Banach algebra with character space
$\Phi_{\mathfrak A}$ and let $\mathcal A$ be a Banach $\mathfrak
A$-bimodule with compatible actions. Let
$\varphi\in\Phi_{\mathfrak A} \cup\{0\}$ and consider the set $\Omega_{\mathcal A}$ of linear maps
$\phi: \mathcal A\longrightarrow\mathfrak A$ such that
\begin{equation*}\phi(ab)=\phi(a)\phi(b),\quad \phi(a\cdot\alpha)=\phi(\alpha\cdot
a)= \varphi(\alpha)\phi(a)\quad(a\in \mathcal A,\,\alpha\in
\mathfrak A)\end{equation*}
A bounded linear functional $m: \mathcal A^*\longrightarrow \mathbb{C}$ is
called a {\it module $(\phi,\varphi)$-mean} on $\mathcal A^*$ if
$m(f\cdot a)=\varphi\circ\phi(a)m(f)$,
$m(f\cdot\alpha)=\varphi(\alpha)m(f)$ and $m(\varphi\circ\phi)=1$
for each $f\in \mathcal A^*, a\in \mathcal A$ and $\alpha\in
\mathfrak A$. We say $\mathcal A$ is {\it module
$(\phi,\varphi)$-amenable} if there exists a module
$(\phi,\varphi)$-mean on $\mathcal A^*$. Also $\mathcal A$ is
called {\it module character amenable} if it is module
$(\phi,\varphi)$-amenable for each $\phi\in \Omega_{\mathcal A}$
and $\varphi\in\Phi_{\mathfrak A} \cup\{0\}$. Note that if $\mathfrak A=\mathbb{C}$ and $\varphi$ is the identity
map then the module $(\phi,\varphi)$-amenability coincides with
$\phi$-amenability \cite{kan1}.

It is shown in \cite[Corollary 2.4]{bod1} that module character amenability of $\mathcal A$ implies module character amenability of $\mathcal
A/J.$ We restate Remark 2.5 of \cite{bod1} for the sake of completeness as follows: Let $\phi\in \Omega_{\mathcal A}, \varphi\in\Phi_{\mathfrak
A}$ and let $\mathcal A$ be module $(\phi,\varphi)$-amenable.
Clearly $\phi((a\cdot\alpha)b-a(\alpha\cdot b))=0$, hence $\phi=0$
on $J$ and $\phi$ lifts to $\tilde \phi: \mathcal A/J\longrightarrow \mathfrak
A$ and clearly $P:=\varphi\circ\tilde \phi$ is a character of $
\mathcal A/J$. Since $\mathcal A$ is module (right) character
amenable there is $m\in  \mathcal A^{**}$ such that $m(f\cdot
a)=\varphi\circ\phi(a)m(f)$. Let $M$ be the restriction of $m$ to
$J^\perp$ then for each $F\in J^\perp$, the functional
$f(a)=F(a+J)$ is well defined and $m(f)=M(F)$ and $\langle f\cdot
a,b\rangle=\langle F\cdot(a+J),(b+J)\rangle$ and
$M(F\cdot(a+J))=m(f\cdot a)=\varphi\circ\phi(a)m(f)=P(a+J)M(F)$.
This shows that $\mathcal A/J$ is $(\tilde
\phi\circ\varphi)$-amenable. If every character $P$ of $\mathcal
A/J$ could be also constructed as above, this argument shows that
module character amenability of $\mathcal A$ implies
character amenability of $\mathcal A/J$.

Recall that a left Banach $\mathcal A$-module $X$ is called {\it{left essential}} if the linear span of $\mathcal A \cdot X=\{a\cdot x : a\in \mathcal A, \, x\in X\}$ is dense in $X$. Right essential $\mathcal A$-modules and two-sided essential $\mathcal A$-bimodules are defined similarly.

We remark that if $\mathcal A$ is a left (right) essential
$\mathfrak{A}$-module, then every $\mathfrak A$-module derivation is also
a derivation, in fact, it is linear. For if $a\in \mathcal A$, there is a sequence $(F_n)\subseteq \mathfrak A\cdot \mathcal A$ such that
$\lim_n F_n=a$. Assume that $F_n=\sum_{m=1}^{T_n}
\alpha_{n,m}\cdot a_{n,m}$ for some finite sequences
$(\alpha_{n,m})_{m=1}^{m=T_n}\subseteq \mathfrak{A}$ and
$(a_{n,m})_{m=1}^{m=T_n}\subseteq \mathcal A$. Then for each $\lambda\in
\mathbb{C}$,
\begin{eqnarray*}
D(\lambda F_n) \!\! \! & = \!\! \! & D\Big(\lambda \sum_{m=1}^{T_n} \alpha_{n,m} \cdot a_{n,m}\Big)= \sum_{m=1}^{T_n} D\big((\lambda\alpha_{n,m}) \cdot a_{n,m}\big) \vspace{0.2cm} \\ & = \!\! \! & \sum_{m=1}^{T_n} (\lambda\alpha_{n,m}) \cdot D(a_{n,m}) =  \sum_{m=1}^{T_n} \lambda D(\alpha_{n,m} \cdot a_{n,m})=  \lambda D(F_n),
\end{eqnarray*}
and so, by the continuity of $D$, $D(\lambda a)=\lambda D(a)$, if
we assume that $\mathcal A$ is a left essential $\mathfrak{A}$-module. Therefore, we have the following result.

\begin{thm}\label{char} Let $\mathcal A$ be a left \emph{(}right\emph{)} Banach essential $\mathfrak{A}$-module. Then, $\mathcal A$ is module character amenable if and only if $\mathcal A/J$ is character amenable.
\end{thm}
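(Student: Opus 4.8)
The plan is to prove the two implications separately, in each case transporting the defining means across the quotient map $\pi\colon\mathcal A\to\mathcal A/J$ and using essentiality to reconcile the $\mathfrak A$-action with the scalar data $\varphi$. Two facts recalled above carry most of the bookkeeping: by \cite[Corollary 2.4]{bod1} module character amenability of $\mathcal A$ passes to $\mathcal A/J$, and the computation in Remark~2.5 shows that module $(\phi,\varphi)$-amenability of $\mathcal A$ forces $(\varphi\circ\tilde\phi)$-amenability of $\mathcal A/J$. The only point left open there is whether \emph{every} character of $\mathcal A/J$ is of the form $\varphi\circ\tilde\phi$, and closing that gap is exactly what the essential hypothesis is for.

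For the forward implication I would fix a character $P\in\Phi_{\mathcal A/J}$ and pull it back to a character $\chi=P\circ\pi$ of $\mathcal A$ vanishing on $J$. Since $J$ contains every element $(a\cdot\alpha)b-a(\alpha\cdot b)=\omega(a\cdot\alpha\otimes b-a\otimes\alpha\cdot b)$, applying $\chi$ gives
\[
\chi\big((a\cdot\alpha)b\big)=\chi\big(a(\alpha\cdot b)\big),\qquad\text{i.e.}\qquad \chi(a\cdot\alpha)\,\chi(b)=\chi(a)\,\chi(\alpha\cdot b),
\]
for all $a,b\in\mathcal A$, $\alpha\in\mathfrak A$. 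Fixing one $a_0$ with $\chi(a_0)\neq0$ and reading this with $a=a_0$ shows that $\varphi(\alpha):=\chi(a_0\cdot\alpha)/\chi(a_0)$ satisfies $\chi(\alpha\cdot a)=\chi(a\cdot\alpha)=\varphi(\alpha)\chi(a)$ for \emph{all} $a$; a one-line computation with $(\alpha\beta)\cdot a=\alpha\cdot(\beta\cdot a)$ then gives multiplicativity of $\varphi$. Here essentiality enters decisively: because $\mathcal A=\overline{\mathfrak A\cdot\mathcal A}$, the relation $\chi(\alpha\cdot a)=\varphi(\alpha)\chi(a)$ forces $\varphi\not\equiv0$ (otherwise $\chi$ would vanish on the dense set $\mathfrak A\cdot\mathcal A$), so $\varphi\in\Phi_{\mathfrak A}$. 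This produces the compatible pair realizing $P$, which is precisely what Remark~2.5 needed; together with \cite[Corollary 2.4]{bod1} it yields $P$-amenability of $\mathcal A/J$ for every $P$, hence character amenability.

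For the converse I would begin with character amenability of $\mathcal A/J$, fix $\varphi\in\Phi_{\mathfrak A}\cup\{0\}$ and $\phi\in\Omega_{\mathcal A}$, and set $P=\varphi\circ\tilde\phi$, a character of $\mathcal A/J$ (or $0$). Taking a $P$-mean $n$ on $(\mathcal A/J)^{*}=J^{\perp}$, I would lift it to a module $(\phi,\varphi)$-mean $m$ on $\mathcal A^{*}$ along the isometry $\pi^{*}\colon(\mathcal A/J)^{*}\to J^{\perp}$: the identity $m(f\cdot a)=\varphi\circ\phi(a)\,m(f)$ and the normalization $m(\varphi\circ\phi)=1$ come straight from the corresponding properties of $n$, while the remaining condition $m(f\cdot\alpha)=\varphi(\alpha)m(f)$ is governed by the scalar $\varphi$ attached to $\chi=\varphi\circ\phi$ exactly as in the displayed relation above. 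Equivalently, one may argue cohomologically: by the remark preceding the statement, every $\mathfrak A$-module derivation out of the essential module $\mathcal A$ is an ordinary ($\mathbb C$-linear) derivation, so the module cohomology detecting module $(\phi,\varphi)$-amenability coincides with the ordinary cohomology detecting $P$-amenability of $\mathcal A/J$, and the two notions match term by term.

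The hard part will be the realization step in the forward direction: turning a bare character of $\mathcal A/J$ into admissible data. The construction of $\varphi$ above is forced and uses only that $\chi$ kills $J$, but verifying that it assembles into an element of $\Omega_{\mathcal A}$ with $\varphi\circ\phi=\chi$ — and, dually, that the lifted $m$ genuinely inherits the $\mathfrak A$-homogeneity $m(f\cdot\alpha)=\varphi(\alpha)m(f)$ — is exactly where essentiality is indispensable, since without it $\varphi$ may degenerate to $0$ and the correspondence between characters of $\mathcal A/J$ and admissible pairs $(\phi,\varphi)$ breaks down. This is precisely what upgrades the one-directional Remark~2.5 into the stated equivalence.
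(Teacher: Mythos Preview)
Your cohomological converse is essentially the paper's argument: given $\phi\in\Omega_{\mathcal A}$, $\varphi\in\Phi_{\mathfrak A}$ and a module derivation $D\colon\mathcal A\to X^{*}$ into the dual of the $(\phi,\varphi)$-module from \cite[Theorem~2.1]{bod1}, the paper invokes the essentiality remark immediately preceding the theorem to conclude that $D$ is $\mathbb C$-linear, then defines $\tilde D(a+J)=D(a)$ on $\mathcal A/J$ and uses character amenability of $\mathcal A/J$ (with respect to $P=\varphi\circ\tilde\phi$) to obtain innerness. Your second paragraph's ``equivalently, one may argue cohomologically'' option is exactly this.

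The forward direction is where you and the paper diverge. The paper disposes of it in one line, ``by the above discussion,'' referring to the restated Remark~2.5 and \cite[Corollary~2.4]{bod1}; it does \emph{not} explicitly verify the caveat there that every character of $\mathcal A/J$ arises as $\varphi\circ\tilde\phi$, nor does it invoke essentiality for this implication. You, by contrast, try to close that gap by manufacturing $\varphi$ from an arbitrary $P\in\Phi_{\mathcal A/J}$ and use essentiality to rule out $\varphi\equiv 0$. That is more honest, but note that you still need a multiplicative $\phi\colon\mathcal A\to\mathfrak A$ (values in $\mathfrak A$, not in $\mathbb C$) with $\varphi\circ\phi=\chi$; you flag this as the hard part without supplying it, and the paper does not supply it either. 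In short: your plan is correct and your converse matches the paper, while on the forward implication you are more explicit than the paper but deploy essentiality at a different point than the paper does (the paper's sole explicit use of essentiality is the $\mathbb C$-linearity of module derivations feeding the converse).
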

\begin{proof}
We include the proof for the case of left $\mathfrak{A}$-module. The other case is similar. First note that by the above discussion, the module character amenability of $\mathcal A$ implies the character amenability of $\mathcal A/J$. For the converse, let $\phi\in \Omega_{\mathcal A}$ and $\varphi\in\Phi_{\mathfrak
A}$. Assume that  $X$ is a Banach $\mathcal A$-module and Banach
${\mathfrak A}$-module such that $a\cdot x=\phi(a)\cdot x$ and
$\alpha\cdot x=x\cdot\alpha=\varphi(\alpha)x$ for all $x\in X,
a\in \mathcal A$ and $\alpha\in \mathfrak A$. Also, $D: \mathcal A \longrightarrow X^*$ is
a module derivation. Since $X$ is a commutative Banach ${\mathcal A}$-${\mathfrak A}$-module, the following module
actions are well-defined
 $$(a+J)\cdot x:=\varphi\circ\phi(a)x, \hspace{0.2cm}x\cdot (a+J):=x\cdot a \hspace{0.3cm} (x \in X ,a \in
\mathcal A),$$
and so $X$ is a Banach $\mathcal A/J$-module. Consider $\tilde{D}: \mathcal
A/J\longrightarrow X $ defined by $\tilde{D}(a+J)=D(a)$, for all $a \in \mathcal A$. Then,  $\tilde{D}$ is well-defined and $\mathbb{C}$-linear,  thus $D$ is an inner module derivation. Therefore, $\mathcal A$ is module character amenable by \cite[Theorem 2.1]{bod1}.
\end{proof}


\subsection{Fourier algebra of an inverse semigroup}
In this subsection we define the Fourier algebra $A(S)$ of an inverse
semigroup $S$ and show that it is an operator $\ell^1(E)$-module,
where $E$ is the set of idempotents of $S$ and $\ell^1(E)$ is the
semigroup algebra of $E$ \cite{AR}.

A discrete semigroup $S$ is called an inverse semigroup if for
each $x\in S$ there is a unique element $x^{*}\in S$ such that
$xx^{*}x=x$ and $x^{*}xx^{*}=x^{*}$. An element $e\in S$ is called
an idempotent if $e=e^{*}=e^{2}$. The set of idempotents of $S$ is
denoted by $E$. This is a commutative subsemigroup of $S$ with a canonical partial order: for $e,f\in E$, $e\leq f$ means that $ef=fe=e$.

For the rest of the paper, $S$ is an inverse semigroup with set of
idempotents $E$. Recall that  the {\it left regular
representation} of $S$ is the map $\lambda:S\longrightarrow
B(\ell^{2}(S))$ defined by
\begin{equation*}
\lambda (s)f(t)=
\begin{cases}
f(s^{*}t),\,\,\,ss^{*} \geq tt^{*}\\
0,\,\,\,\,\,\,\,\,\,\,\,\,\,\,\,\text{otherwise},
\end{cases}
\end{equation*}
and that $\lambda$ is a faithful $*$-representation of $S$
\cite[Proposition 2.1]{B}. We define the {\it fundamental
operator} $W:\ell^{2}(S\times S)\longrightarrow \ell^{2}(S\times
S)$ by
\begin{equation*}
W\psi(s,t)=
\begin{cases}
\psi(s,s^{*}t),\,\,\,ss^{*}\geq tt^{*}\\
0,\,\,\,\,\,\,\,\,\,\,\,\,\,\,\,\,\,\,\, \text{otherwise},
\end{cases}
\end{equation*}
for $\psi\in \ell^2(S\times S)$ and $s,t\in S$. It is easy to see that
$W$ is a bounded linear operator. It is shown in \cite{AR} that
 $W$ is a {\it multiplicative partial isometry} in the sense
of \cite{BS}.

The double commutant $L(S)=(\lambda (S))^{''} \subseteq
B(\ell^{2}(S))$ is called the (left) {\it semigroup von Neumann algebra}
of $S$. It is a bi-algebra with the co-multiplication map $\Gamma: L(S)\longrightarrow L(S)\hat\otimes L(S); \ \Gamma(\lambda(s))=\lambda(s)\otimes\lambda(s)$ (c.f. \cite{AR}). The (unique) predual $A(S)$ of $L(S)$ is a Banach space.

Let \,\,\,$\omega_{s,t}(\varphi)=\langle
\varphi\delta_{s}\mid\delta_{t}\rangle$, for each $\varphi\in
L(S)$. Then $A(S)$ is generated by the set $\{\omega_{s,t}: s,t \in
S\}$ as a Banach space \cite{AR}. One might consider an element $\omega\in A(S)$ as a function on
$S$ via
\begin{equation*}
\omega(s)= \omega(\lambda (s))\quad (s\in S).
\end{equation*}
Since $L(S)$ is a norm closed subspace of $B(\ell^{2}(S))$, it is
an operator space and hence $A(S)$ inherits a canonical operator
space structure from $L(S)$ as its predual \cite{ER}. On the
other hand, the multiplication map $\Gamma_{\ast}:A(S \times
S)\longrightarrow A(S)$ is a complete contraction (see [2,
display 3.8]), and $A(S\times S)\cong A(S)\hat{\otimes}A(S)$
\cite[7.2.4]{ER}. Therefore, $A(S)$ is a completely contractive
Banach algebra \cite{Rua}.


\subsection{Module structure of A(S)} \label{m}
It is shown in \cite{AR} that the pointwise
multiplication $\omega_{s,t}\omega_{u,v}$ identifies  with the
algebra multiplication $\omega_{s,t}\cdot\omega_{u,v}$ defined by $$\omega_{s,t}\cdot\omega_{u,v}(\varphi) =
(\omega_{s,t}\otimes\omega_{u,v} )(W(\varphi\otimes 1)W^{*})\quad
(\varphi\in L(S)).$$
 We write
$\varepsilon_{a}:=\omega_{a^{*}a, a}\ .$
 The span of
$\varepsilon_{a}$'s is  dense in $A(S)$. These elements are
natural replacements for the point indicator functions
$\delta_{a}$ in $A(G)$ for a discrete group $G$. One important
difference between $\varepsilon_{a}$ and $\delta_{a}$ is that
$\varepsilon_{a}$ may have a large support \cite{AR}.

The left regular representation on $S$ lifts to the
$*$-representation $\tilde{\lambda}:\ell^1(S)\rightarrow
B(\ell^2(S))$ which is faithful by \cite[Theorem 2]{W}, and we may assume
that $\ell^1(S)\subseteq L(S)$. Since
$\tilde{\lambda}(\ell^{1}(S)){''}\supseteqq
\lambda(S)^{''}=L(S)$, $\ell^{1}(S)$ is a $w^{*}$-dense subset of
$L(S)$ \cite{AR}.

Note that $A(S)$ is an operator $\ell^{1}(E)$-module
under the actions
\begin{equation}\label{ee}
\delta_{e}.\varepsilon_{a}= \varepsilon_{a}\, ,\,\,\,\,
\varepsilon_{a}.\delta_{e}=
\begin{cases}
\varepsilon_{a},\,\,\,\,\,\,\ a^{*}a \leq e \\
0,\,\,\,\,\,\,\,\,\,\,\,\,\,\ \text{otherwise}.
\end{cases}
\end{equation}
If $a\in S, e\in E$, then
$\varepsilon_{a}\varepsilon_{ae}= \varepsilon_{a}$. When
$a^{*}a\leq e$, we have $\varepsilon_{a}=\varepsilon_{a}$. Also these module actions are continuous \cite{AR}.


\subsection{Index of subsemigroups}

For an inverse semigroup $S$, the equivalence relation defined by $s\sim t$ if and only if there is $e\in E$ with $es=et$ gives the maximal group homomorphic image $G_S:=\{[s]: s\in S\}$ as the set of equivalence classes \cite{mn}. A subset $T\subseteq S$ is called an subsemigroup of $S$ if it is an $*$-semigroup in the operations of $S$. In this case, $G_T=\{[t]: t\in T\}$ is a subgroup of $S$.

For the rest of this subsection, let $T$ be a subsemigroup of $S$. We say that two elements $x,y\in S$ are $T$-equivalent, and write $x\sim_T y$ if there is an element $t\in T$ with $xy^*\sim t$.

\begin{lem} $\sim_T$ is an equivalence relation.
\end{lem}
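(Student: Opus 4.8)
The plan is to reduce the whole statement to the elementary fact that coset membership in a subgroup is an equivalence relation, by passing to the maximal group homomorphic image $G_S$. First I would record the standard properties of the quotient map $q\colon S\to G_S$, $q(s)=[s]$, coming from \cite{mn}: it is a semigroup homomorphism, every idempotent satisfies $[e]=e_{G_S}$ (the identity of $G_S$, since $[e]=[e]^2$ in a group), and consequently $[s^*]=[s]^{-1}$ for all $s\in S$ (because $[ss^*]=[s][s^*]$ equals the identity). With these in hand, the condition $xy^*\sim t$ for some $t\in T$ reads $[x][y]^{-1}=[xy^*]\in G_T$, so that $x\sim_T y$ is exactly the relation ``$[x]$ and $[y]$ lie in the same left coset of the subgroup $G_T\le G_S$''.

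Once this identification is made, the three axioms follow immediately, and I would also exhibit explicit witnesses in $T$ to keep the argument self-contained. For reflexivity, pick any $t_0\in T$ (so $T\neq\varnothing$); then $t_0t_0^*\in T$ is an idempotent and $[xx^*]=e_{G_S}=[t_0t_0^*]$, whence $xx^*\sim t_0t_0^*$ and $x\sim_T x$. For symmetry, if $xy^*\sim s$ with $s\in T$, then $(xy^*)^*=yx^*$ and, since $\sim$ is a $*$-compatible congruence, $yx^*\sim s^*$; as $T$ is closed under $*$, the element $s^*\in T$ witnesses $y\sim_T x$. For transitivity, suppose $xy^*\sim s$ and $yz^*\sim t$ with $s,t\in T$; then $st\in T$ and in $G_S$ one computes $[st]=[x][y]^{-1}[y][z]^{-1}=[x][z]^{-1}=[xz^*]$, so $xz^*\sim st$ and $x\sim_T z$.

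The hard part is not the bookkeeping of the three axioms but verifying the two structural facts on which everything rests: that $[s^*]=[s]^{-1}$, and the product-absorption identity $[xy^*yz^*]=[xz^*]$ used in transitivity, i.e. that the idempotent factor $y^*y$ disappears in $G_S$. Both are consequences of $\sim$ being the minimal group congruence on the inverse semigroup $S$ --- equivalently, of $q$ being a homomorphism onto a group in which all idempotents collapse to the identity --- and I would invoke \cite{mn} for these. The only additional hypotheses needed are that $T$ be nonempty and closed under $*$ and products (so that $G_T$ is genuinely a subgroup of $G_S$, containing $e_{G_S}$ and closed under inverses), which is exactly the assumption that $T$ is a subsemigroup.
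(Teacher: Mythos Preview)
Your proof is correct and, at the level of explicit witnesses, identical to the paper's: reflexivity via $tt^*$ for any $t\in T$, symmetry via $t^*$, and transitivity via the product of the two witnesses, with the key step being that the idempotent $y^*y$ (or $x^*x$) is absorbed by the congruence $\sim$. The only difference is one of packaging: you frontload the identification of $\sim_T$ with left-coset equivalence modulo $G_T$ in $G_S$, whereas the paper argues directly in $S$ (writing $xz^*=xx^*xz^*\sim xy^*yz^*\sim ts$) and postpones the coset correspondence to the \emph{next} lemma, where it is used to compute the index. Your route has the minor advantage that once the identification is made both lemmas fall out at once; the paper's route keeps the equivalence-relation check elementary and self-contained in $S$ without invoking properties of the quotient map.
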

\begin{proof}
If $x,y,z\in S$, then for each $t\in T$, $xx^*\sim tt^*\in T$. Also if $xy^*\sim t\in T$ then $yx^*\sim t^*\in T$. Finally,  if $xy^*\sim t\in T$ and $yz^*\sim s\in T$, then $$xz^*=xx^*xz^*\sim xy^*yz^*\sim ts\in T.$$
This means that $x\sim_T z$.
\end{proof}

We denote the equivalence class of $x\in S$ under the equivalence relation $\sim_T$ by $xT$ and the set of all such classes by $S/T$, and define the $E$-index of $T$ in $S$ by $$[S:T]_E:=\big|S/T\big|,$$
where the right hand side is the cardinality of $S/T$.

\begin{lem} $[S:T]_E=[G_S:G_T]$.
\end{lem}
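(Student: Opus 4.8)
The plan is to set up an explicit bijection between the quotient $S/T$ and the collection of cosets of $G_T$ in $G_S$, and then read off the equality of cardinalities $|S/T| = [G_S:G_T]$.

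First I would record the group-theoretic reformulation of $\sim_T$. Writing $\sigma\colon S\to G_S$, $\sigma(s)=[s]$, for the canonical (surjective) homomorphism onto the maximal group image, we have $\sigma(s^*)=[s]^{-1}$, since $ss^*$ is idempotent and hence $[ss^*]$ is the identity of $G_S$. Consequently $[xy^*]=[x][y^*]=[x][y]^{-1}$. Because $G_T=\{[t]:t\in T\}$, the defining condition ``$xy^*\sim t$ for some $t\in T$'' is equivalent to $[xy^*]\in G_T$, i.e. $[x][y]^{-1}\in G_T$. Thus $x\sim_T y$ if and only if $[x]$ and $[y]$ lie in the same right coset, $G_T[x]=G_T[y]$.

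Next I would define $\Psi\colon S/T\to G_T\backslash G_S$ by $\Psi(xT)=G_T[x]$. The reformulation above shows simultaneously that $\Psi$ is well defined (if $xT=yT$ then $G_T[x]=G_T[y]$) and injective (if $G_T[x]=G_T[y]$ then $[x][y]^{-1}\in G_T$, so $x\sim_T y$). For surjectivity, any coset has the form $G_T g$ with $g\in G_S$; since $\sigma$ is onto, $g=[s]$ for some $s\in S$, whence $G_T g=\Psi(sT)$. Therefore $\Psi$ is a bijection and $[S:T]_E=|S/T|=|G_T\backslash G_S|=[G_S:G_T]$.

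The only point requiring care, and the step I expect to be the main obstacle, is verifying that $G_T$ is genuinely a subgroup of $G_S$ so that the coset language applies: since $T$ is a $*$-subsemigroup, $G_T$ is closed under products, contains $[t]^{-1}=[t^*]$ for each $t\in T$, and contains the identity $[tt^*]$. This is precisely the fact already recorded in the preceding discussion. With it in hand, the correspondence between $\sim_T$-classes and cosets is a direct translation and no further computation is needed.
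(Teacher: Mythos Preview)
Your proof is correct and follows essentially the same route as the paper: both construct an explicit bijection $xT\mapsto$ (coset of $[x]$) by translating the condition $xy^*\sim t\in T$ into $[x][y]^{-1}\in G_T$, then check well-definedness, injectivity, and surjectivity. The only cosmetic difference is that you (correctly) phrase things in terms of right cosets $G_T[x]$, whereas the paper writes left cosets $[x]G_T$; since the left and right indices of a subgroup coincide, this has no effect on the conclusion.
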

\begin{proof}
For $x\in S$, let $[x]G_T$ be the left coset of the subgroup $G_T$, then we show that $xT\mapsto [x]G_T$ is a one-one correspondence between $S/T$ and $G_S/G_T$. This well defined, since if $x\sim_T y$ then $xy^*\sim t\in T$, hence $[x][y]^{-1}=[xy^*]=[t]\in G_T$, and so $[x]G_T=[y]G_T$. A reverse argument as above shows that the map is also injective. The surjectivity is trivial.
\end{proof}

We say that $T$ is $E$-\emph{abelian} if $st\sim ts$, for each $s,t\in T$. In this case, the subgroup $G_T\leq G_S$ is abelian. We say that $S$ is \emph{almost $E$-abelian}, if it has a subsemigroup $T$ of finite $E$-index which is $E$-abelian. This is in parallel with the notion of almost abelian groups (a group with an abelian subgroup of finite index). By the above two lemmas, we have the following result.

\begin{prop} \label{aa} $S$ is almost $E$-abelian if and only if $G_S$ is almost abelian.
\end{prop}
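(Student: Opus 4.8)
The plan is to prove Proposition~\ref{aa} by reducing it entirely to the two lemmas immediately preceding it, which together bridge the semigroup-theoretic notion of $E$-index with the group-theoretic index $[G_S:G_T]$. First I would fix the definitions: $S$ is almost $E$-abelian precisely when there exists a subsemigroup $T \subseteq S$ that is $E$-abelian (meaning $st \sim ts$ for all $s,t \in T$) and has finite $E$-index $[S:T]_E < \infty$. The goal is to match this, on the nose, with the existence of an abelian subgroup of $G_S$ of finite group-theoretic index.

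For the forward direction, suppose $S$ is almost $E$-abelian, witnessed by such a $T$. The key observation (already noted in the text just before the statement) is that if $T$ is $E$-abelian then $G_T = \{[t] : t \in T\}$ is an abelian subgroup of $G_S$: indeed $st \sim ts$ forces $[s][t] = [st] = [ts] = [t][s]$ in $G_S$, so $G_T$ is abelian. Then the second lemma, $[S:T]_E = [G_S:G_T]$, immediately converts the finiteness of the $E$-index into the finiteness of the group index $[G_S:G_T]$. Hence $G_T$ is an abelian subgroup of $G_S$ of finite index, so $G_S$ is almost abelian.

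For the converse, suppose $G_S$ is almost abelian, so there is an abelian subgroup $H \leq G_S$ with $[G_S:H] < \infty$. Here I would need to realize $H$ as $G_T$ for some subsemigroup $T \subseteq S$ whose $E$-abelianness and finite $E$-index then follow from the lemmas. The natural candidate is to take $T := \{s \in S : [s] \in H\}$, the full preimage of $H$ under the quotient map $s \mapsto [s]$. I would first check that $T$ is a subsemigroup in the sense of the paper (an $*$-subsemigroup): it is closed under the operations of $S$ because $[\cdot]$ is a homomorphism and $H$ is a subgroup, and it is closed under $*$ since $[s^*] = [s]^{-1} \in H$. By construction $G_T = H$, so $G_T$ is abelian, which gives $E$-abelianness of $T$ via $[s][t]=[t][s] \Rightarrow [st]=[ts] \Rightarrow st \sim ts$; and again by the second lemma $[S:T]_E = [G_S:G_T] = [G_S:H] < \infty$. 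Thus $S$ is almost $E$-abelian.

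The main obstacle I anticipate is in the converse: one must be careful that the preimage $T$ of the abelian subgroup $H$ is genuinely a subsemigroup as defined (closed under both multiplication and the involution $*$) and that it contains enough idempotents for the constructions to make sense—in particular that $G_T$ really equals $H$ rather than some proper subgroup. This hinges on the surjectivity of the quotient map $S \to G_S$ (every $[s]$ is hit, and idempotents map to the identity of $G_S$), which guarantees the preimage construction faithfully recovers $H$. Once this bookkeeping is settled, the equivalence is a direct application of Lemma (on $E$-abelian subsemigroups yielding abelian $G_T$) and the index lemma $[S:T]_E=[G_S:G_T]$, with the two directions being essentially mirror images of each other.
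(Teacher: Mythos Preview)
Your proposal is correct and follows exactly the route the paper intends: the paper's own ``proof'' is simply the sentence ``By the above two lemmas, we have the following result,'' and you have spelled out precisely how those lemmas are used, including the preimage construction $T=\{s\in S:[s]\in H\}$ for the converse direction, which the paper leaves entirely implicit. Your bookkeeping that $T$ is a $*$-subsemigroup with $G_T=H$ is the only nontrivial detail, and you have handled it correctly.
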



\section{Module cohomological properties of the Fourier algebra}
In this section we study module amenability, module character amenability, module (operator) biflatness, and module (operator) biprojectivity of the Fourier algebra of an inverse semigroup $S$. Let $I$ and $J$ be the corresponding closed ideals of
$\mathcal A \hat \otimes \mathcal A$ and $\mathcal A$, respectively.  We firstly bring two following definitions from \cite{boa}.

\begin{defn}\label{def1} A Banach algebra $\mathcal A$ is called {\it module
biprojective} (as an $\mathfrak A$-module) if $\widetilde{\omega}$  has a bounded right inverse which is an
 ${\mathcal A}/J$-${\mathfrak A}$-module homomorphism.
\end{defn}

\begin{defn}\label{def2} A Banach algebra $\mathcal A$ is called {\it module biflat}
(as an $\mathfrak A$-module) if $ \widetilde{\omega}^* $ has a
bounded left inverse which is an ${\mathcal A}/J$-${\mathfrak
A}$-module homomorphism.
\end{defn}

Recall that a (\emph{bounded}) \emph{left approximate identity} in a Banach algebra $\mathcal A$ is a (bounded) net $\{e_{l}\}_{l\in \mathcal L}$ in $\mathcal A$ such that $\lim_{l}e_{l}a=a$ for all $a\in \mathcal A$. Similarly, a (bounded) right approximate identity can be defined in $A$. A (bounded) approximate identity in $\mathcal A$ is both a (bounded) left approximate identity and a (bounded) right approximate identity.


We say the Banach algebra $\mathfrak A$ acts trivially on $\mathcal A$ from left (right) if there is a continuous linear functional $f$ on $\mathfrak A$ such that $\alpha\cdot a=f(\alpha)a$ $(a\cdot\alpha=f(\alpha)a)$ for all $\alpha\in \mathfrak A $ and $a\in \mathcal A$.

The following lemma is proved in \cite[Lemma 3.13]{bab}.

\begin{lem}\label{tp}
If ${\mathfrak A}$ acts on $\mathcal A$ trivially from the left or right and
$\mathcal A/J$ has a right bounded approximate identity, then for
each $\alpha \in {\mathfrak A}$ and $a\in\mathcal A$ we have
$f(\alpha)a-a\cdot\alpha \in J$.
\end{lem}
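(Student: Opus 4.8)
The plan is to unwind the definition of $J$ and then exploit the right bounded approximate identity of $\mathcal A/J$. Recall that $J=\overline{\langle\omega(I)\rangle}$, where $I$ is the closed ideal of $\mathcal A\hat\otimes\mathcal A$ generated by the elements $a\cdot\alpha\otimes b-a\otimes\alpha\cdot b$. Applying $\omega$ to such a generator immediately gives the one structural fact I will need, namely
\[
(a\cdot\alpha)b-a(\alpha\cdot b)\in J\qquad(a,b\in\mathcal A,\ \alpha\in\mathfrak A).
\]

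I would first dispose of the right-trivial case, which is vacuous: if $a\cdot\alpha=f(\alpha)a$, then $f(\alpha)a-a\cdot\alpha=0\in J$ and there is nothing to prove. So the real content lies in the left-trivial case, where $\alpha\cdot a=f(\alpha)a$ for all $\alpha\in\mathfrak A$ and $a\in\mathcal A$. There I would substitute $\alpha\cdot b=f(\alpha)b$ into the displayed membership and factor on the right: since
\[
(a\cdot\alpha)b-a(\alpha\cdot b)=(a\cdot\alpha)b-f(\alpha)ab=\big(a\cdot\alpha-f(\alpha)a\big)b,
\]
putting $x:=a\cdot\alpha-f(\alpha)a$ yields $xb\in J$ for every $b\in\mathcal A$; equivalently $(x+J)(b+J)=0$ in $\mathcal A/J$ for all $b$, so that $x+J$ is a left annihilator of $\mathcal A/J$.

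Finally I would bring in the right bounded approximate identity of $\mathcal A/J$, say $\{u_l\}$. Each product $(x+J)u_l$ vanishes because $x+J$ annihilates everything on the right, while $(x+J)u_l\to x+J$ by the defining property of the approximate identity. Hence $x+J=0$, that is, $f(\alpha)a-a\cdot\alpha=-x\in J$, as claimed. The computation is short, and the only point requiring care is the matching of sides: since $x$ multiplies $\mathcal A/J$ from the \emph{left}, it is precisely a \emph{right} approximate identity that recovers $x+J$ as $\lim_l(x+J)u_l$. I would therefore make sure that the factorization producing $x$ leaves the free variable $b$ on the right, so that the hypothesis on the side of the approximate identity is used exactly as needed.
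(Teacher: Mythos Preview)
Your argument is correct. The paper itself does not supply a proof of this lemma; it simply quotes the result from \cite[Lemma~3.13]{bab}, so there is no in-paper argument to compare yours against. Your route---observing that $(a\cdot\alpha)b-a(\alpha\cdot b)\in\omega(I)\subseteq J$, specializing the left-trivial action to factor this as $\big(a\cdot\alpha-f(\alpha)a\big)b\in J$, and then killing the left annihilator $x+J$ with the right bounded approximate identity of $\mathcal A/J$---is exactly the natural one and is presumably what the cited reference does as well. Your remark that the right-trivial case is vacuous, and your care in matching the side of the factorization with the side of the approximate identity, are both on point.
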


The following result is main key for the main result  of this paper, coming in the next subsection.

\begin{thm}\label{p}Suppose that ${\mathfrak A}$ acts trivially on $\mathcal A$
from the left and $\mathcal A$ has a bounded approximate identity. Then
\begin{enumerate}
\item[(i)] $\mathcal A$ is module biprojective  if and only if $\mathcal A/J$ is biprojective;
\item[(ii)] $\mathcal A$ is module biflat if and only if $\mathcal A/J$ is biflat.
\end{enumerate}
\end{thm}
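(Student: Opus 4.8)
The plan is to establish both equivalences by transferring the defining module homomorphisms across the quotient map $q:\mathcal A\to\mathcal A/J$. The key structural fact I would exploit is that, under the stated hypotheses, the map $\tilde\omega:\mathcal A\hat\otimes_{\mathfrak A}\mathcal A\to\mathcal A/J$ should be identifiable with the ordinary multiplication map $\omega_{\mathcal A/J}:(\mathcal A/J)\hat\otimes(\mathcal A/J)\to\mathcal A/J$ of the quotient algebra, at least up to a canonical isomorphism. Concretely, I would first verify that $\mathcal A\hat\otimes_{\mathfrak A}\mathcal A\cong(\mathcal A/J)\hat\otimes(\mathcal A/J)$ as operator spaces: the left-triviality of the $\mathfrak A$-action together with Lemma~\ref{tp} (applied using the bounded approximate identity of $\mathcal A$, which descends to one on $\mathcal A/J$) shows that the ideal $I$ generated by the elements $a\cdot\alpha\otimes b-a\otimes\alpha\cdot b$ collapses exactly the same tensors that are identified when passing to $\mathcal A/J$ on both factors. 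This is the technical heart of the argument and I expect it to require care about closures and about the interaction of $J^{\perp\perp}$ with the quotient.

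\emph{Proof of (i).} Assume $\mathcal A/J$ is biprojective, so $\omega_{\mathcal A/J}$ has a bounded right inverse $\rho$ that is an $\mathcal A/J$-bimodule homomorphism. Under the identification above, $\rho$ becomes a bounded right inverse of $\tilde\omega$; I would then check that the $\mathfrak A$-module structure is automatically respected, since the left action of $\mathfrak A$ is trivial (hence governed by the functional $f$) and the right action is carried by the quotient structure through Lemma~\ref{tp}, so that $\rho$ is in fact an $\mathcal A/J$-$\mathfrak A$-module homomorphism. This gives module biprojectivity of $\mathcal A$ by Definition~\ref{def1}. For the converse, a bounded right inverse of $\tilde\omega$ that is an $\mathcal A/J$-$\mathfrak A$-module map transports directly to a bounded right inverse of $\omega_{\mathcal A/J}$ which is an $\mathcal A/J$-bimodule map, because the $\mathcal A/J$-module requirement is already built into Definition~\ref{def1}; thus $\mathcal A/J$ is biprojective.

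\emph{Proof of (ii).} The biflatness statement follows the same template applied to the adjoint map. Dualizing the identification $\mathcal A\hat\otimes_{\mathfrak A}\mathcal A\cong(\mathcal A/J)\hat\otimes(\mathcal A/J)$ turns a bounded left inverse of $\omega_{\mathcal A/J}^{*}$ into a bounded left inverse of $\tilde\omega^{*}$ and conversely, with the module-homomorphism property preserved by the same triviality-plus-Lemma~\ref{tp} reasoning as in (i). Here I would only need to note that taking adjoints is compatible with the operator-space identification and with the weak-$*$ topologies, so the left-inverse condition in Definition~\ref{def2} matches the classical biflatness condition on $\mathcal A/J$ verbatim.

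\emph{The main obstacle} I anticipate is the first step: rigorously establishing the isomorphism $\mathcal A\hat\otimes_{\mathfrak A}\mathcal A\cong(\mathcal A/J)\hat\otimes(\mathcal A/J)$ and, crucially, showing that $\tilde\omega$ corresponds to $\omega_{\mathcal A/J}$ under it. The subtlety is that $J=\overline{\langle\omega(I)\rangle}$ is defined via the closure of the span of $\omega(I)$, so I must confirm that quotienting each tensor factor by $J$ produces exactly the relations imposed by $I$ — no more and no less. This is where the bounded approximate identity is indispensable (it makes $\mathcal A/J$ essential and lets Lemma~\ref{tp} convert left-triviality into the statement that $f(\alpha)a-a\cdot\alpha\in J$), and I would spend most of the effort verifying that these identifications are isometric, or at least topological, isomorphisms of operator spaces rather than merely algebraic bijections.
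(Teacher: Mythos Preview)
Your strategy is sound and would yield a correct proof, but it differs from the paper's route in a meaningful way. You aim to establish an honest isomorphism $\mathcal A\hat\otimes_{\mathfrak A}\mathcal A\cong(\mathcal A/J)\hat\otimes(\mathcal A/J)$ intertwining $\tilde\omega$ with $\omega_{\mathcal A/J}$, after which both directions of (i) and (ii) fall out symmetrically. The paper never claims this isomorphism: it only constructs the one-way map $\phi:(\mathcal A/J)\hat\otimes(\mathcal A/J)\to\mathcal A\hat\otimes_{\mathfrak A}\mathcal A$, $\phi\big((a+J)\otimes(b+J)\big)=a\otimes b+I$, checks via a lengthy bounded-approximate-identity computation that $\phi$ is well defined (that is, that elements of $J\otimes\mathcal A+\mathcal A\otimes J$ land in $I$), and then uses $\phi\circ\rho$ as the required right inverse of $\tilde\omega$. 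For the converse implications the paper does not invert $\phi$ at all but instead invokes Lemma~\ref{tp} together with \cite[Propositions~2.3, 2.4]{boa}. Your approach is more self-contained and conceptually cleaner; the paper's is shorter because half the work is outsourced to \cite{boa}.

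One caution about your plan: you attribute the equality of $I$ with the kernel $N$ of $(\mathcal A\hat\otimes\mathcal A)\to(\mathcal A/J)\hat\otimes(\mathcal A/J)$ entirely to Lemma~\ref{tp}, but that lemma only delivers the inclusion $I\subseteq N$. Indeed, with the left action trivial each generator $a\cdot\alpha\otimes b-a\otimes\alpha\cdot b=(a\cdot\alpha-f(\alpha)a)\otimes b$ lies in $J\otimes\mathcal A$ by Lemma~\ref{tp}, and since $N$ is an ideal this forces $I\subseteq N$. The reverse inclusion $N\subseteq I$---showing that $j\otimes c\in I$ for $j$ a generator of $J$---is exactly what the paper's long computation with the approximate identity $\{e_j\}$ establishes, and you will still need some form of that argument (this is where the hypothesis of a bounded approximate identity, or at least $\mathcal A=\mathcal A^2$ as in Remark~\ref{rem}, is genuinely used). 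Finally, note that this theorem is stated in the ordinary Banach-algebra category; the operator-space version is the subsequent Proposition~\ref{p2}, so your references to operator-space isomorphisms belong there rather than here.
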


\begin{proof}
(i) Suppose that $\mathcal A/J$ is biprojective such that $\rho$ is
 the bounded right inverse of $\omega_{\mathcal A/J}$ which is an ${\mathcal A}/J$-module homomorphism. Define the map $\phi:(\mathcal A/J)\widehat{\otimes}(\mathcal A/J)\longrightarrow (\mathcal A\widehat{\otimes} \mathcal A)/I\cong \mathcal A\widehat{\otimes}_{\mathfrak A}\mathcal A$ by $\phi((a+J)\otimes(b+J)):=(a\otimes b)+I$. Assume that  $\{e_{j}\}$ is a bounded approximate identity for $\mathcal A$. For each $a,b,c\in \mathcal A$ and $\alpha\in \mathfrak A$, we obtain
\begin{eqnarray*}
[(a\cdot\alpha)b-a(\alpha\cdot b)]\otimes c&=&(a\cdot\alpha)b\otimes c-a(\alpha\cdot b)\otimes c\\
&=&\lim_{j}[((a\cdot\alpha)b\otimes ce_{j})-(a(\alpha\cdot b)\otimes e_{j}c)]\\
&=&\lim_{j}[((a\cdot\alpha)\otimes c)(b\otimes e_{j})-(a\otimes e_{j})((\alpha\cdot b)\otimes c)]\\
&=&\lim_{j}[((a\cdot\alpha)\otimes c)(b\otimes e_{j})-(a\otimes(\alpha\cdot c))(b\otimes e_{j})\\
&&+(a\otimes(\alpha\cdot c))(b\otimes e_{j})-(a\otimes e_{j})((\alpha\cdot b)\otimes c)\\
&&+(a\otimes e_{j})(b\otimes(\alpha\cdot c))-(a\otimes e_{j})(b\otimes(\alpha\cdot c))]\\
&=&\lim_{j}[((a\cdot\alpha)\otimes c-a\otimes(\alpha\cdot c))(b\otimes e_{j})\\
&&+(a\otimes(\alpha\cdot c))(b\otimes e_{j})-(a\otimes e_{j})((\alpha\cdot b)\otimes c\\
&&-b\otimes(\alpha\cdot c))-(a\otimes e_{j})(b\otimes(\alpha\cdot c))]\\
&=&\lim_{j}[((a\cdot\alpha)\otimes c-a\otimes(\alpha\cdot c))(b\otimes e_{j})\\
&&+(ab\otimes(\alpha\cdot c)e_{j})-(a\otimes e_{j})(f(\alpha)b\otimes c\\
&&-b\otimes f(\alpha)c)-(ab\otimes e_{j}(\alpha\cdot c))]\\
&=&\lim_{j}[((a\cdot\alpha)\otimes c-a\otimes(\alpha\cdot c))(b\otimes e_{j})\\
&&+(ab\otimes(\alpha\cdot c)e_{j})-(a\otimes e_{j})f(\alpha)(b\otimes c-b\otimes c)\\
&&-(ab\otimes e_{j}(\alpha\cdot c))]\\
&=&\lim_{j}[((a\cdot\alpha)\otimes c-a\otimes(\alpha\cdot c))(b\otimes e_{j})\in I.
\end{eqnarray*}
Similarly, $c\otimes[(a\cdot\alpha)b-a(\alpha\cdot b)]\in I$. Hence, $\phi$ is well-defined. And $\phi$ is also a module homomorphism. It is easily verified that $\widetilde{\omega}_{\mathcal A}\circ\phi=\omega_{\mathcal A/J}$. Since $\rho$ is  the bounded right inverse of $\omega_{\mathcal A/J}$, the mapping $\phi\circ\rho$ is a  bounded right inverse of $\widetilde{\omega}_{\mathcal A}$.

Conversely, assume that $\mathcal A$ is module biprojective. Since $\mathcal A$ has a bounded approximate identity, it follows from Lemma \ref{tp} that $\mathcal A/J$ is a commutative $\mathfrak A$-module. Therefore, the result follows from \cite[Proposition 2.3]{boa}.

(ii)  Suppose that $\mathcal A/J$ is biflat and $\widetilde{\rho}$ is
 the bounded left inverse of $\omega^*_{\mathcal A/J}$ which is an ${\mathcal A}/J$-module homomorphism. If the mapping $\phi$ is as in the part (i), it follows from the proof of previous part that $\phi^*\circ\widetilde{\omega}^*_{\mathcal A}=\omega^*_{\mathcal A/J}$. This means that $\widetilde{\rho}\circ\phi^*$ is the left inverse of $\widetilde{\omega}^*_{\mathcal A}$ which is an ${\mathcal A}/J$-${\mathfrak A}$-module homomorphism.

Conversely, if $\mathcal A$ is module biprojective, then $\mathcal A/J$ is biflat by Lemma \ref{tp} and \cite[Proposition 2.4]{boa}. This finishes the proof.
\end{proof}

When $\mathcal A$ is a completely contractive Banach algebra and $J$ is a closed ideal, then $A/J$ has a canonical operator space structure coming from the identification $$\mathbb M_n(A/J)=\mathbb M_n(A)/\mathbb M_n(J)$$
making it into a completely contractive Banach algebra \cite[3.1, 16.1]{ER}. We say that $A$ is {\it module operator
biprojective} ({\it biflat}, respectively) if in the above definitions, the right (left, resp.) inverse of $\widetilde{\omega}$ (of $\widetilde{\omega}^*$, resp.) is completely bounded. It is not hard to see that the above argument also works in this setting and we have the following operator analog. We give a short proof for the sake of completeness. 

\begin{prop}\label{p2}Suppose that ${\mathfrak A}$ acts trivially on $\mathcal A$
from the left and $\mathcal A$ has a bounded approximate identity. Then
\begin{enumerate}
\item[(i)] $\mathcal A$ is module operator biprojective  if and only if $\mathcal A/J$ is operator biprojective;
\item[(ii)] $\mathcal A$ is module operator biflat if and only if $\mathcal A/J$ is operator biflat.
\end{enumerate}
\end{prop}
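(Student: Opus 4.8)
The plan is to mimic the proof of Theorem \ref{p} verbatim, tracking the operator-space structure at each stage. The authors already announce this strategy (``It is not hard to see that the above argument also works''), so the work is to check that every bounded map appearing in that proof is in fact completely bounded, and that every completely bounded map inherits a completely bounded inverse in the reverse direction. First I would recall that the map $\phi:(\mathcal A/J)\widehat{\otimes}(\mathcal A/J)\longrightarrow (\mathcal A\widehat{\otimes}\mathcal A)/I$, sending $(a+J)\otimes(b+J)$ to $(a\otimes b)+I$, is induced by the canonical complete quotient maps $\mathcal A\to\mathcal A/J$ composed with the quotient $\mathcal A\widehat{\otimes}\mathcal A\to(\mathcal A\widehat{\otimes}\mathcal A)/I$; since the operator projective tensor product is functorial for completely bounded maps and quotients of operator spaces carry the canonical operator space structure (\cite[3.1]{ER}), the map $\phi$ is automatically completely bounded (indeed a complete contraction). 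Its adjoint $\phi^*$ is then completely bounded as well, because adjunction preserves complete boundedness. The identity $\widetilde{\omega}_{\mathcal A}\circ\phi=\omega_{\mathcal A/J}$, established already in the proof of Theorem \ref{p}, is a purely algebraic relation that is unaffected by the operator-space considerations.

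For part (i), I would argue the direction ``$\mathcal A/J$ operator biprojective $\Rightarrow\mathcal A$ module operator biprojective'' by taking the completely bounded right inverse $\rho$ of $\omega_{\mathcal A/J}$ and forming $\phi\circ\rho$. This is a composition of completely bounded maps, hence completely bounded, and it is a right inverse of $\widetilde{\omega}_{\mathcal A}$ by the same computation as in Theorem \ref{p}(i); it remains an $\mathcal A/J$-$\mathfrak A$-module homomorphism since both $\phi$ and $\rho$ are. For the converse, I would invoke Lemma \ref{tp} to conclude that $\mathcal A/J$ is a commutative $\mathfrak A$-module (using the bounded approximate identity), and then appeal to the operator analog of \cite[Proposition 2.3]{boa}; alternatively one observes directly that a completely bounded module right inverse of $\widetilde{\omega}_{\mathcal A}$ pushes forward to a completely bounded right inverse of $\omega_{\mathcal A/J}$ through the complete quotient structure.

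For part (ii), I would dualize: starting from a completely bounded left inverse $\widetilde{\rho}$ of $\omega^*_{\mathcal A/J}$, the relation $\phi^*\circ\widetilde{\omega}^*_{\mathcal A}=\omega^*_{\mathcal A/J}$ (again algebraic, from the proof of Theorem \ref{p}(ii)) shows that $\widetilde{\rho}\circ\phi^*$ is a left inverse of $\widetilde{\omega}^*_{\mathcal A}$. Since $\widetilde{\rho}$ and $\phi^*$ are both completely bounded, so is their composition, and the module-homomorphism property is preserved. The converse again runs through Lemma \ref{tp} together with the operator analog of \cite[Proposition 2.4]{boa}.

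The only genuine obstacle I anticipate is bookkeeping rather than conceptual: one must be sure that the operator space structures used on $(\mathcal A/J)\widehat{\otimes}(\mathcal A/J)$, on $(\mathcal A\widehat{\otimes}\mathcal A)/I\cong\mathcal A\widehat{\otimes}_{\mathfrak A}\mathcal A$, and on the duals are exactly the canonical ones (quotient structure on $\mathcal A/J$ and on the tensor quotient, dual operator space structure on the adjoints), so that the functoriality statements genuinely apply; and one must confirm that the cited Propositions of \cite{boa} have operator (completely bounded) versions valid in the completely contractive category. Granting those, every map in the construction is transparently completely bounded, and the proof is a line-by-line transcription of Theorem \ref{p} with ``bounded'' replaced by ``completely bounded''.
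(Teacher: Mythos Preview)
Your proposal is correct and follows essentially the same approach as the paper: define the same map $\phi$, note it is completely bounded, use $\phi\circ\rho$ (resp.\ $\widetilde{\rho}\circ\phi^*$) as the required completely bounded inverse, and handle the converses via Lemma~\ref{tp} together with the operator analogs of \cite[Propositions 2.3, 2.4]{boa}. If anything, your justification of why $\phi$ is completely bounded (via functoriality of the operator projective tensor product and the universal property of quotient operator spaces) is more explicit than the paper's, which simply asserts it.
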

\begin{proof}
(i) If $\mathcal A/J$ is operator biprojective and $\rho$ is
 a completely bounded right inverse of $\omega_{\mathcal A/J}$ and an ${\mathcal A}/J$-module homomorphism. Define  $\phi:(\mathcal A/J)\widehat{\otimes}(\mathcal A/J)\longrightarrow (\mathcal A\widehat{\otimes} \mathcal A)/I\cong \mathcal A\widehat{\otimes}_{\mathfrak A}\mathcal A$ by $\phi((a+J)\otimes(b+J)):=(a\otimes b)+I$. As in the proof of Theorem \ref{p}, we get that  $\phi$ is well-defined and a completely bounded module homomorphism. Also $\widetilde{\omega}_{\mathcal A}\circ\phi=\omega_{\mathcal A/J}$. Since $\rho$ is  a completely  bounded right inverse of $\omega_{\mathcal A/J}$, the map $\phi\circ\rho$ is a completely  bounded right inverse of $\widetilde{\omega}_{\mathcal A}$.
The converse follows from \cite[Proposition 2.3]{boa} using a similar argument.

(ii)  If $\mathcal A/J$ is operator biflat and $\widetilde{\rho}$ is
 a completely bounded left inverse of $\omega^*_{\mathcal A/J}$ and an ${\mathcal A}/J$-module homomorphism, then for the map $\phi$  as in  part (i),  $\phi^*\circ\widetilde{\omega}^*_{\mathcal A}=\omega^*_{\mathcal A/J}$. This means that $\widetilde{\rho}\circ\phi^*$ is a completely bounded left inverse of $\widetilde{\omega}^*_{\mathcal A}$ and an ${\mathcal A}/J$-${\mathfrak A}$-module homomorphism. The converse follows from a modification of Lemma \ref{tp} and \cite[Proposition 2.4]{boa}. 
\end{proof}

\begin{rem} \label{rem}
The condition that $\mathcal A$ has a bounded approximate identity (in Theorem \ref{p} and Proposition \ref{p2}) happens to be a strong condition for the Fourier algebra. Indeed the Fourier algebra of a group has a bounded approximate identity iff the group is amenable \cite{Le}. It is not known when the Fourier algebra of an inverse semigroup has a bounded approximate identity. However, from the proof it follows that the above theorem also holds under the weaker condition that  $\mathcal A=\mathcal A^{2}$ (this is used in the argument that shows the constructed map $\phi$ is well defined.) As we shall see in the proof of the next theorem, this condition is automatically satisfied for the Fourier algebra $A(S)$ of an inverse semigroup $S$.
\end{rem}

In the next theorem which is our main result, we consider $A(S)$ as an operator $\ell^1(E)$-module (see subsection \ref{m}.)

\begin{thm}\label{p3} Let $S$ be an inverse semigroup.

\begin{enumerate}
\item[(i)] $A(S)$ is module amenable if and only if $S$ is almost abelian.
\item[(ii)]  When $S$ is left amenable, then $A(S)$ is module biflat or module biprojective if and only if $S$ is almost abelian.
\item[(iii)]  $A(S)$ is always operator biflat and operator biprojective.
\item[(iv)]  If $E$ has a minimum element, then $A(S)$ is module operator amenable if and only if $S$ is amenable. If $E$ has no minimum
element, then $A(S)$ is always module operator amenable.
\item[(v)]  If $E$ has a minimum element, then $A(S)$ is module character amenable if and only if $S$ is amenable. If $E$ has no minimum
element, then $A(S)$ is always module character amenable.
\end{enumerate}
\end{thm}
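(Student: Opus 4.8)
The plan is to reduce every assertion to a known (co)homological property of the Fourier algebra of the maximal group homomorphic image $G_S$, by first pinning down the closed ideal $J$ and the quotient $A(S)/J$. Using $\delta_e\cdot\varepsilon_a=\varepsilon_a$ together with $\varepsilon_a\cdot\delta_e=\varepsilon_a$ when $a^*a\le e$ and $0$ otherwise, I would compute the image under $\omega$ of a typical generator of $I$: for $c,d\in S$ and $e\in E$,
$(\varepsilon_c\cdot\delta_e)\varepsilon_d-\varepsilon_c(\delta_e\cdot\varepsilon_d)$ equals $0$ if $c^*c\le e$ and equals $-\varepsilon_c\varepsilon_d$ otherwise. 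Hence $\varepsilon_c\varepsilon_d\in J$ exactly when $c^*c$ fails to lie below some idempotent. This produces a dichotomy. If $E$ has no minimum element, then every idempotent $c^*c$ fails to be below some $e$, so \emph{all} products $\varepsilon_c\varepsilon_d$ lie in $J$; since $\varepsilon_a\varepsilon_a=\varepsilon_a$ (take $e=a^*a$ in $\varepsilon_a\varepsilon_{ae}=\varepsilon_a$), the $\varepsilon_a$ are idempotents spanning a dense subspace, so $A(S)=A(S)^2$ and $J=A(S)$, i.e.\ $A(S)/J=\{0\}$. If $E$ has a minimum $e_0$, the same computation shows $J=\overline{\operatorname{span}}\{\varepsilon_c:c^*c\ne e_0\}$, and I would identify $A(S)/J$ \emph{completely isometrically} with the Fourier algebra $A(G_S)$ of the discrete group $G_S$, the surviving classes of the $\varepsilon_a$ with $a^*a=e_0$ matching the maximal subgroup at $e_0$. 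Setting up this completely isometric algebra isomorphism, so that operator-space invariants transfer, is the main obstacle.

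Granting that identification, the remaining hypotheses are routine: $\ell^1(E)$ acts trivially on $A(S)$ from the left (the augmentation character $f(\delta_e)=1$ realizes $\delta_e\cdot\varepsilon_a=\varepsilon_a$), $A(S)$ is a left essential $\ell^1(E)$-module, and $A(S)=A(S)^2$, so by Remark \ref{rem} the conclusions of Theorem \ref{p} and Proposition \ref{p2} apply without a bounded approximate identity. I would treat part (iii) for both cases simultaneously. When $E$ has no minimum, $A(S)/J=\{0\}$ is trivially operator biflat and operator biprojective. When $E$ has a minimum, $A(S)/J\cong A(G_S)$ with $G_S$ discrete, so by Wood's theorem $A(G_S)$ is operator biprojective, hence also operator biflat. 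In either case Proposition \ref{p2} yields that $A(S)$ is module operator biflat and module operator biprojective, giving the unconditional statement.

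For the amenability statements I would pass through the quotient and invoke the group-level dictionary. In part (i), the module-amenability reduction (the classical analogue of Theorem \ref{char}) gives that $A(S)$ is module amenable iff $A(S)/J\cong A(G_S)$ is amenable, which by Forrest--Runde holds iff $G_S$ is almost abelian, equivalently (Proposition \ref{aa}) iff $S$ is almost $E$-abelian. In part (iv), module operator amenability of $A(S)$ corresponds to operator amenability of $A(G_S)$, which by Ruan's theorem holds iff $G_S$ is amenable, i.e.\ iff $S$ is amenable; when $E$ has no minimum the quotient is $\{0\}$, so $A(S)$ is module operator amenable unconditionally. Part (v) is handled identically through Theorem \ref{char}: $A(S)$ is module character amenable iff $A(G_S)$ is character amenable iff $G_S$ is amenable, hence iff $S$ is amenable, and unconditionally when $E$ has no minimum.

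Part (ii) requires the most delicate bookkeeping. Assuming $S$ is left amenable, $G_S$ is amenable, so by Leptin's theorem $A(G_S)$ has a bounded approximate identity. By Theorem \ref{p}, $A(S)$ is module biflat (resp.\ module biprojective) iff $A(G_S)$ is biflat (resp.\ biprojective). Since a Banach algebra with a bounded approximate identity is amenable precisely when it is biflat, biflatness of $A(G_S)$ is equivalent to its amenability, hence (Forrest--Runde) to $G_S$ being almost abelian. For biprojectivity, biprojective implies biflat, so the forward direction again forces amenability of $A(G_S)$ and thus almost abelianness of $G_S$; conversely, for a discrete almost abelian group the Fourier algebra is biprojective (for discrete abelian $G$, $A(G)=L^1(\widehat G)$ with $\widehat G$ compact, which is biprojective, and passage to a finite-index extension preserves biprojectivity). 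Both equivalences then translate to $S$ being almost $E$-abelian via Proposition \ref{aa}. The only genuinely new ingredient beyond the cited group results is, once more, the completely isometric identification $A(S)/J\cong A(G_S)$, which must be established carefully enough to carry the operator-space structure used in parts (iii)--(v).
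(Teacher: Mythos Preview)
Your approach is essentially the paper's: reduce each assertion to the corresponding property of the quotient $A(S)/J\cong A(G_S)$ via Theorem~\ref{p}, Proposition~\ref{p2}, and Theorem~\ref{char}, then invoke the standard group-level characterizations (Forrest--Runde, Runde, Wood, Ruan, Monfared). The only difference is that you compute $J$ and set up the dichotomy explicitly, whereas the paper offloads the identification $A(S)/J\cong A(G_S)$ and the module-amenability reduction to \cite{AR} and \cite{abe}; the completely isometric isomorphism you flag as the ``main obstacle'' is exactly what is supplied by \cite{AR}.
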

\begin{proof}
First note that by \cite[Proposition 4.13]{AR} the condition $\mathcal A=\mathcal A^{2}$ holds for $\mathcal A=A(S)$ and Remark \ref{rem} shows that Theorem \ref{p} and Proposition \ref{p2} apply without having a bounded approximate identity.

(i) This follows from \cite[Propositions 3.2, 3.3]{abe} and \cite[Theorem 2.3]{fr}.

(ii) If $A(S)$ is module biflat, and so is $A(G_S)$ by Theorem \ref{p}. Since $G_S$ is amenable, it follows from the main theorem in \cite{run} that $G_S$ is almost abelian. Therefore $S$ is almost abelian by Proposition \ref{aa}. Conversely, if $S$ is almost abelian, then so is $G_S$ and $A(S)$ is module biprojective by the last corollary of \cite{run} and Theorem \ref{p}.

(iii) This follows from Proposition \ref{p2}, \cite[Theorem 4.5]{wo}, and \cite[Theorem 2.4]{ar} (note that a discrete group is always a [QSIN]-group, in the sense of  \cite[page 374]{ar}.

(iv) This is \cite[Theorem 5.14]{AR}.

(v) Since $A(S)$ is always an essential $\ell^1(E)$-module with actions (\ref{ee}), the result follows from Theorem \ref{char} and \cite[Corollary 2.4]{mon}.
\end{proof}



\bibliographystyle{line}
\bibliography{JAMS-paper}

\begin{thebibliography}{}



\bibitem{A}M. Amini, {\it Module amenability for semigroup algebras},
 Semigroup Forum, {\bf 69} (2004), 243--254.

\bibitem{A2} M. Amini, {\it Corrigendum, Module amenability for semigroup algebras}, Semigroup Forum, {\bf 72}
(2006), 493.

\bibitem{abe} M. Amini, A. Bodaghi and  D. Ebrahimi Bagha, {\it Module amenability of the second dual and
module topological center of semigroup algebras}, Semigroup Forum,
\textbf{80} (2010), 302--312.

\bibitem{AR} M. Amini and R. Rezavand, {\it Module operator amenability of the Fourier algebra of an inverse
semigroup},  Semigroup Forum, {\bf 92} (2016), 45--70.

\bibitem{ar} O. Yu. Aristov, V. Runde, and N. Spronk, {\it Operator biflatness of the Fourier algebra and
approximate indicators for subgroups},  J. Functional Analysis, {\bf 209} (2004), 367--387.


\bibitem{B} B. A. Barnes, {\it Representations of the $l^{1}$-algebra of an inverse semigroup},  Trans. Amer.
Math. Soc., {\bf 218} (1976), 361--396.

\bibitem{bodaghi} A. Bodaghi, {\it Module $(\varphi,\psi)$-amenability of Banach algeras},
Arch. Math. \textbf{46}, No. 4 (2010), 227--235.

\bibitem{boa}A. Bodaghi and M. Amini,
\emph{Module biprojective and module biflat Banach algebras}, U. P. B. Sci. Bull., Series A. {\bf 75}, Iss. 3, (2013), 25--36.

\bibitem{bod1} A. Bodaghi and M. Amini, {\it Module character amenability of Banach algebras},
 Arch. Math (Basel)., {\bf 99} (2012), 353--365.

\bibitem{bab} A. Bodaghi, M. Amini and  R. Babaee, {\it Module derivations into iterated duals of Banach algebras}, Proc. Rom. Aca., Series A,
\textbf{12}, No. 4 (2011), 277--284.

\bibitem{bel} A. Bodaghi, H. Ebrahimi and M. L. Bami, {\it Generalized notions of module character amenability}, Filomat, {\bf 31}, No. 6 (2017), 1639--1654.

\bibitem{bj} A. Bodaghi and A. Jabbari, {\it Module biflatness of the second
dual of Banach algebras}, Math. Reports, {\bf 17(67)}, 2 (2015), 235--247.

\bibitem{BS} G. Bohm and K. Szlachanyi, {\it Weak $C^{*}$-Hopf algebras
and multiplicative isometries},  J. Operator Theory, {\bf 45}
(2001), 357--376.

\bibitem{DN} J. Duncan and I. Namioka, {\it Amenability of inverse semigroups and their
semigroup algebras},  Proc. Royal Soc. Edinburgh,  {\bf 80} (1978), 309--321.

\bibitem{ER} E. G. Effros  and Z.-J. Ruan, {\it Operator Spaces}, Oxford University Press, New York, 2000.

\bibitem{fr} B. E. Forrest and V. Runde, {\it Amenability and weak amenability of the Fourier algebra},  Mathematische Zeitschrift, {\bf 250} (2005),  731--744.

\bibitem{hel} A. Ya. Helemskii, {\it Flat Banach moduls and amenable algebras}, Trans. Moscow Math. Soc., \textbf{47} (1984),
199--244.

\bibitem{kan1} E. Kaniuth, A. T-M. Lau and J. Pym, \textit{On $\varphi$-amenability of Banach algebras}, Math. Proc.
Camb. Soc., {\bf 144} (2008), 85--96.

\bibitem{Le} H. Leptin, {\it Sur l'alg\`{e}bre de Fourier d'un groupe localment
compact}, C. R. Acad. Sci. Paris S\'{e}r. A-B, {\bf 266} (1968),
A1180-A1182.

\bibitem{mon} M. S. Monfared, {\it Character amenability of Banach algebras},
 Math. Proc. Camb. Soc., {\bf 144} (2008), 697--706.

\bibitem{mn} W. D. Munn,{\it A class of irreducible matrix representations of an arbitrary inverse semigroup}, Proc.
Glasgow Math. Assoc., \textbf{5} (1961), 41--48.

\bibitem{Rua} Z.-J. Ruan, {\it The operator amenability of $A(G)$}, Amer. J. Math., {\bf 117} (1995),
1449--1474.

\bibitem{run} V. Runde, {\it Biflatness and biprojectivity of the Fourier algebra}, Arch. Math (Basel), {\bf 92} (20092), 525--530.


\bibitem{wo} P. J. Wood, {\it The operator biprojectivity of the Fourier algebra}, Canad. J. Math., {\bf 54} (2002),
1100-1120.

\bibitem{W} J. R. Wordingham, {\it The left regular $*$-representation
of an inverse semigroup},  Proc. Amer. Math. Soc., {\bf 86} (1982), 55--58.
\end{thebibliography}

\end{document}